\definecolor{black}{rgb}{0.0, 0.0, 0.0}
\definecolor{red}{rgb}{1.0, 0.5, 0.5}
\newcommand{\margnote}[1]{
\ifthenelse{\boolean{shownotes}}%
{\marginpar{\raggedright\tiny\texttt{#1}}}%
{}%
}
\newcommand{\hole}[1]{
\ifthenelse{\boolean{shownotes}}%
{\begin{center} \fbox{ \rule {.25cm}{0cm} \rule[-.1cm]{0cm}{.4cm}
\parbox{.85\textwidth}{\begin{center} \texttt{#1}\end{center}} \rule
{.25cm}{0cm}}\end{center}} {} }
\title[Exponential synchronization of Kuramoto oscillators with time delayed coupling]{Exponential synchronization of Kuramoto oscillators with time delayed coupling}
\author[Choi]{Young-Pil Choi}
\address[Young-Pil Choi]{\newline Department of Mathematics, Yonsei University,
    \newline Seoul 03722, Republic of Korea}
\email{ypchoi@yonsei.ac.kr}
\author[Pignotti]{Cristina Pignotti}
\address[Cristina Pignotti]{\newline Dipartimento di Ingegneria e Scienze dell'Informazione e Matematica, Universit\`a di L'Aquila,
\newline 67010 L'Aquila, Italy }
\email{pignotti@univaq.it}
\numberwithin{equation}{section}
\newtheorem{theorem}{Theorem}[section]
\newtheorem{lemma}{Lemma}[section]
\newtheorem{proposition}{Proposition}[section]
\newtheorem{remark}{Remark}[section]
\newtheorem{definition}{Definition}[section]
\def\({\begin{eqnarray}}
\def\){\end{eqnarray}}
\def\[{\begin{eqnarray*}}
\def\]{\end{eqnarray*}}
\newcommand{\R}{\mathbb R}
\newcommand{\mc}{\mathcal C}
\newcommand{\bq}{\begin{equation}}
\newcommand{\eq}{\end{equation}}
\newcommand{\lt}{\left}
\newcommand{\rt}{\right}
\newcommand{\md}{\mathcal{D}}
\def\N{\mathbb{N}}
\begin{document}
\allowdisplaybreaks

\date{\today}

\subjclass[]{}
\keywords{}

\begin{abstract}
We discuss the asymptotic frequency synchronization for the non-identical Kuramoto oscillators with time delayed interactions. We provide explicit lower bound on the coupling strength and upper bound on the time delay in terms of initial configurations ensuring exponential synchronization. This generalizes not only the frequency synchronization estimate by Choi et al. [{\it Physica D}, 241, (2012), 735--754]  for the non-identical Kuramoto oscillators without time delays but also improves previous result by Schmidt et al. [{\it Automatica}, 48, (2012), 3008--3017] in the case of homogeneous time delays where the initial phase diameter is assumed to be less than $\pi/2$. The proof relies on a Lyapunov functional approach.
\end{abstract}

\maketitle \centerline{\date}


%
%
%
%
\section{Introduction}
Complex dynamical systems have recently received immense research interest. These systems extensively appear in many disciplines such as biology, applied mathematics, control theory, and statistical physics \cite{ABPRS, BB, PLR, Ward}. In the current work, we are interested in the synchronization phenomena of large weakly coupled Kuramoto oscillators \cite{Ku75}. More precisely, we consider the effects of time delay on the dynamics of the Kuramoto model. Let $\theta_i = \theta_i(t)$ be the phase of $i$-th Kuramoto oscillators at time $t>0$. Then,  our main system reads
\bq\label{main_eq}
\frac{d}{dt}\theta_i(t) = \Omega_i + \frac\kappa N \sum_{k\ne i} \sin(\theta_k(t - \tau) - \theta_i(t)), \quad i = 1,\dots, N, \quad  t > 0,
\eq
subject to the initial data:
\bq\label{IC}
\theta_i(s) = \theta^0_i(s) \quad \mbox{for} \quad s \in [0,-\tau],
\eq
where $\tau>0$ is the time delay and $\theta_i^0\in \mc^1[-\tau, 0],$ $ i=1,\dots, N.$ Here $\kappa > 0$ denotes the uniform coupling strength between oscillators, and $\Omega_i$ is the natural frequency of $i$-th oscillator, which is assumed to be a random variable extracted from a given distribution $g = g(\Omega)$.

For the system \eqref{main_eq}, we study an exponential frequency synchronization, which refers to the phenomenon where all oscillators have the same frequency exponentially fast as time goes on. We provide sufficient conditions for the coupling strength and the time delay in order to deduce the synchronization result. An explicit lower bound for the coupling strength, which only depends on the diameters of natural frequencies and initial phase configurations, is given in {\bf (H2)}, and upper bound on the size of the time delay is presented in {\bf (H3)} below. Papers most closely related to our work are that of Schmidt et al \cite{SPMA12}  and Choi et al \cite{CHJK12}. In \cite{SPMA12}, the frequency synchronization phenomenon in networks of non-identical Kuramoto oscillators with heterogeneous delayed coupling is discussed. Although we do not take into account networks in the system \eqref{main_eq}, the stability regime we obtain here is larger than that of \cite{SPMA12}. To be more precise, the stability regime for the frequency synchronization in \cite{SPMA12} is a subset of
\[
\lt\{ (\theta_1, \dots, \theta_N) : \max_{1 \leq i,j \leq N} |\theta_i - \theta_j| < \frac\pi2 \rt\},
\]
while our result can cover the case $\max_{1 \leq i,j \leq N} |\theta_i - \theta_j| > \pi/2$, see {\bf (H1)} below. In order to have a larger stability regime, we take a similar strategy used in \cite{CHJK12}, where the classical Kuramoto model without time delayed coupling, i.e., the system \eqref{main_eq} with $\tau = 0$, is considered. We first show that the diameter of phase configurations is uniformly bounded in time by its initial one, which is greater than $\pi/2$. Then we show that the phases will be confined inside an arc with geodesic length strictly less than $\pi/2$ in a finite time. The main difficulties in analyzing the system \eqref{main_eq} are the nonlinearity of the interaction term and the lack of conservation of mass. In this respect, we extend the result \cite{CHJK12} to the time delayed coupling case, see Remark \ref{rmk_nodelay} for more detailed discussion. Compared to the work \cite{SPMA12}, we employ a Lyapunov functional approach that gives the information about the convergence rate, which is exponential. Moreover our estimates are uniform with respect to the number of oscillators $N$. This enables us to extend results at the particle level to the continuum model, see \cite{BCM15, CCHKK14, CH17, CPP, CP19}. We also refer to \cite{N01, PJM10, YS99} for the study of effects of time delays in multi-agent dynamical system.

For the complete frequency synchronization estimate, we introduce several notations to be used throughout the paper:
\[
D(\theta(t)) :=\max_{1 \leq i,j \leq N}|\theta_i(t) - \theta_j(t)|, \quad D(\omega(t)) :=\max_{1 \leq i,j \leq N}|\omega_i(t) - \omega_j(t)|,
\]
where $\omega_i(t) := \dot\theta_i(t):=\frac{d\theta_i(t)}{dt}$,
and
\[
D(\Omega) :=\max_{1 \leq i,j \leq N}|\Omega_i - \Omega_j|.
\]
Let us denote
\begin{equation}\label{erre}
R_\omega:=\max_{t\ge -\tau}
\max_{1 \leq i \leq N} |\omega_i(t)|.
\end{equation}
Note that we can easily find
\[
|\omega_i(t)| \leq |\Omega_i| + \kappa \leq \max_{1 \leq i \leq N} |\Omega_i| + \kappa,
\]
for all $i =1,\cdots, N.$  Before stating our main result, we introduce a definition of complete frequency synchronization.
\begin{definition}\label{def_sync} Let $\theta(t) := (\theta_1(t), \cdots, \theta_N(t))$ be a global classical solution to the system \eqref{main_eq}-\eqref{IC}. Then the system exhibits the complete frequency synchronization if and only if the relative frequency differences go to zero:
\[
\lim_{t \to \infty}\max_{1 \leq i,j \leq N}|\omega_i(t) - \omega_j(t)| = 0.
\]
\end{definition}
We restrict our analysis to the case of more than $2$ oscillators, i.e. $N>2;$ for the case $N=2$ see Remark \ref{N2} below. We also list our main assumptions for our  result as follows.
\begin{itemize}
\item[{\bf (H1)}] The diameter of the initial phases satisfies
\[
D_{\theta_0}:=D(\theta(0)) \in \lt( \frac\pi2,\,\pi - \delta\rt) \quad \mbox{with }  \delta \in \lt(0,\frac\pi2\rt),
\]
\item[{\bf (H2)}] The coupling strength $\kappa >0$ is large enough such that
\[
\kappa > \lt(\frac {N} {N-2}\rt)  \frac{D(\Omega)}{\displaystyle 2\sin\lt(\frac{D_{\theta_0}}{2}\rt)\cos\lt( \frac{D_{\theta_0} + \delta}{2}\rt)},
\]
\item[{\bf (H3)}] The time delay $\tau \geq 0$ is small enough such that
\[
\tau < \bar\tau:= \min\lt\{\frac{\delta}{2R_\omega}, \frac{1}{R_\omega}\lt(\frac\pi2 - D_* \rt)\rt\},
\]
where $R_\omega$ is the constant defined in \eqref{erre} and $D_*$ is the dual angle of the initial phase diameter $D_{\theta_0}$ given by
\begin{equation}\label{Dstar}
D_* := \arcsin\lt(\sin (D_{\theta_0}) \rt) \in \lt(0,\frac\pi2\rt).
\end{equation}
\end{itemize}
Note that {\bf (H2)} and {\bf (H3)} imply
\begin{equation}\label{used}
\kappa >\lt(\frac {N} {N-2}\rt)\frac{D(\Omega)}{\displaystyle 2\sin\lt(\frac{D_{\theta_0}}{2}\rt)\cos\lt( \frac{D_{\theta_0}}{2} + R_\omega\tau\rt)}.
\end{equation}
We now present our main result of this paper.
\begin{theorem}\label{thm_main} Let $N>2$ and $\theta = \theta(t)$ be a global classical solution to the system \eqref{main_eq}-\eqref{IC} satisfying {\bf (H1)}-{\bf (H3)}. If the time delay $\tau \geq 0$ satisfies the additional smallness assumption:
\bq\label{add_as}
\tau <\ln \left ( 1+ \frac {\cos(D_* + R_\omega \bar\tau)} {\kappa (2+\cos(D_* + R_\omega \bar\tau))} \right ),
\eq
then the time delayed Kuramoto oscillators achieve the asymptotic complete frequency synchronization in the sense of Definition \ref{def_sync} exponentially fast. More precisely, we have
\[
D(\omega(t)) \leq C e^{-\gamma t}, \quad t \geq t_*,
\]
for some $t_*>0$ and positive constants $\gamma$ and $C$ independent of $t$.
\end{theorem}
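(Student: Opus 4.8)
The plan is to run the three-stage program announced in the introduction, converting the delay-free argument of \cite{CHJK12} into the delayed setting by systematically replacing $\theta_k(t-\tau)$ (and later $\omega_k(t-\tau)$) by $\theta_k(t)$ up to the error $R_\omega\tau$, which is legitimate because $|\theta_k(t)-\theta_k(t-\tau)|\le R_\omega\tau$ by \eqref{erre}. Write $D(\theta(t))=\theta_M(t)-\theta_m(t)$ for indices $M=M(t),m=m(t)$ attaining the extremal phases; then $t\mapsto D(\theta(t))$ is Lipschitz and, almost everywhere,
\begin{equation*}
\frac{d}{dt}D(\theta(t))=(\Omega_M-\Omega_m)+\frac{\kappa}{N}\Big[\sum_{k\ne M}\sin(\theta_k(t-\tau)-\theta_M(t))-\sum_{k\ne m}\sin(\theta_k(t-\tau)-\theta_m(t))\Big].
\end{equation*}
Pairing the $N-2$ indices $k\notin\{M,m\}$ common to both sums and using $\sin A-\sin B=2\cos\frac{A+B}{2}\sin\frac{A-B}{2}$, each such pair equals $-2\sin\frac{D(\theta(t))}{2}\cos\big(\theta_k(t-\tau)-\frac{\theta_M(t)+\theta_m(t)}{2}\big)$, while the two leftover boundary terms are nonpositive once $D(\theta(t))+R_\omega\tau<\pi$. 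Since the shifted argument lies in $[-\frac{D}{2}-R_\omega\tau,\frac{D}{2}+R_\omega\tau]$ and $\frac{D}{2}+R_\omega\tau<\pi/2$ under (H1) and (H3), I obtain
\begin{equation*}
\frac{d}{dt}D(\theta(t))\le D(\Omega)-\frac{2\kappa(N-2)}{N}\sin\Big(\frac{D(\theta(t))}{2}\Big)\cos\Big(\frac{D(\theta(t))}{2}+R_\omega\tau\Big).
\end{equation*}
Evaluating the right-hand side at $D(\theta(t))=D_{\theta_0}$ and invoking \eqref{used} shows it is strictly negative there, which yields the uniform a priori bound $D(\theta(t))\le D_{\theta_0}<\pi-\delta$ for all $t\ge0$.

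Next I would prove finite-time confinement into a small arc. The gain from (H1) is that $\sin$ assumes equal values at an angle and its supplement, so the restoring strength at a diameter in $(\pi/2,\pi-\delta)$ is governed by the dual angle $D_*$ of \eqref{Dstar}; concretely, the displayed differential inequality has a strictly negative right-hand side for all $D(\theta(t))$ in a range reaching down to the dual angle, so $D(\theta(t))$ decreases monotonically, crosses $\pi/2$, and after a finite time $t_*$ satisfies $D(\theta(t))\le D_*$. Combining this confinement with the delay shift gives, for every $t\ge t_*$ and all $i,k$,
\begin{equation*}
|\theta_k(t-\tau)-\theta_i(t)|\le D(\theta(t))+R_\omega\tau\le D_*+R_\omega\tau<\frac{\pi}{2},
\end{equation*}
the last inequality being exactly the second clause of (H3); consequently $\cos(\theta_k(t-\tau)-\theta_i(t))\ge\cos(D_*+R_\omega\bar\tau)=:c_0>0$ for $t\ge t_*$.

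The final stage is a Lyapunov/Halanay estimate at the frequency level. Differentiating \eqref{main_eq} gives
\begin{equation*}
\dot\omega_i(t)=\frac{\kappa}{N}\sum_{k\ne i}\cos(\theta_k(t-\tau)-\theta_i(t))\,\big(\omega_k(t-\tau)-\omega_i(t)\big),\qquad t>0,
\end{equation*}
whose coefficients are bounded in $[c_0,1]$ for $t\ge t_*$. Tracking $D(\omega(t))=\omega_{M}(t)-\omega_{m}(t)$ for the frequency-extremal indices and splitting each coupling into a dissipative diagonal contribution $-\frac{\kappa}{N}\sum_{k\ne i}\cos(\cdots)\,\omega_i(t)$ (coercive thanks to $c_0$) and a delayed off-diagonal contribution in $\omega_k(t-\tau)$, I would derive a Halanay-type inequality
\begin{equation*}
\frac{d}{dt}D(\omega(t))\le-\alpha\,D(\omega(t))+\beta\sup_{s\in[t-\tau,t]}D(\omega(s)),\qquad t\ge t_*,
\end{equation*}
with $\alpha$ of order $\kappa c_0$ and $\beta$ of order $\kappa(2+c_0)$ times the $e^{\tau}-1$ factor measuring how much the delayed extrema can exceed the present ones over one delay window. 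The extra smallness assumption \eqref{add_as} is precisely the algebraic rearrangement of $\alpha>\beta$, and with this strict inequality the classical Halanay lemma furnishes $D(\omega(t))\le Ce^{-\gamma t}$ for $t\ge t_*$, where $\gamma>0$ is the positive root of $\gamma=\alpha-\beta e^{\gamma\tau}$; this is the asserted exponential frequency synchronization.

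The principal difficulty I anticipate is the off-diagonal bookkeeping in this last inequality: the indices realizing $\max_k\omega_k(t)$ and $\min_k\omega_k(t)$ need not coincide with those realizing the delayed extrema $\max_k\omega_k(t-\tau),\min_k\omega_k(t-\tau)$, so the sums cannot simply be telescoped, and one must carefully quantify how $\omega_k(t-\tau)$ relates to $\sup_{s\in[t-\tau,t]}D(\omega(s))$ in order to produce clean constants $\alpha,\beta$ whose comparison reproduces the logarithmic threshold \eqref{add_as}. Making the confinement of Step 2 sharp enough that $c_0=\cos(D_*+R_\omega\bar\tau)$ is the operative lower bound uniformly for $t\ge t_*$ is what links the three stages and pins down the final rate $\gamma$.
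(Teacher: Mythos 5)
Your Stages 1 and 2 follow the paper's Lemma \ref{lem_ineq_d} and Proposition \ref{prop_key} in all essentials (the only slip is that the nonpositivity of the two boundary terms $k\in\{M,m\}$ requires a \emph{lower} bound $D(\theta(t))>\delta/2>2R_\omega\tau$ as well as the upper bound $D(\theta(t))+R_\omega\tau<\pi$, which is why the paper states \eqref{ineq_d} only while $D(\theta(t))>\delta/2$; this does not affect the conclusion). The genuine gap is in Stage 3. The Halanay inequality
$\md^+ D(\omega(t))\le-\alpha D(\omega(t))+\beta\sup_{s\in[t-\tau,t]}D(\omega(s))$
is asserted but not derived, and the obstruction you yourself flag is precisely the one that cannot be waved away: after splitting $\omega_k(t-\tau)-\omega_i(t)=(\omega_k(t-\tau)-\omega_k(t))+(\omega_k(t)-\omega_i(t))$, the second piece is dissipative with coefficient $\zeta_*$, but the first piece is the \emph{self-drift} of a single oscillator's frequency over the delay window, and this is not a diameter evaluated at any single time; it is controlled only by $\int_{t-\tau}^t|\dot\omega_k(s)|\,ds$, whose bound via \eqref{eq_omega} reintroduces cross-time differences $\omega_l(s-\tau)-\omega_k(s)$ and hence recurses. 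Your alternative decomposition into a ``dissipative diagonal'' term $-\frac{\kappa}{N}\sum_k\cos(\cdots)\,\omega_i(t)$ and an ``off-diagonal'' delayed term is worse: $\omega_i(t)$ alone has no sign, so that diagonal term is not coercive. Closing the recursion is exactly what the paper's machinery does: it introduces $\sigma_\tau(t)=\int_{t-\tau}^t\max_k|\dot\omega_k(s)|\,ds$, proves $\md^+D(\omega(t))\le 2\kappa\sigma_\tau(t)-\kappa\zeta_*D(\omega(t))$ (Lemma \ref{stimaresto}) together with the closure estimate $\max_i|\dot\omega_i(t)|\le\kappa\sigma_\tau(t)+\kappa D(\omega(t))$ (Lemma \ref{altro}), and then absorbs $\sigma_\tau$ via the Lyapunov functional $\mathcal L(t)=D(\omega(t))+\eta\int_{t-\tau}^te^{-(t-s)}\int_s^t\max_j|\dot\omega_j(\sigma)|\,d\sigma\,ds$. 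None of this (or an equivalent bootstrap) appears in your proposal.

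A secondary consequence: your claim that \eqref{add_as} ``is precisely the algebraic rearrangement of $\alpha>\beta$'' is unsupported. In the paper the factor $e^{\tau}-1$ in the threshold $\kappa(2+\zeta_*)(e^{\tau}-1)<\zeta_*$ comes from the exponential weight $e^{-(t-s)}$ in the memory term of $\mathcal L$, via the compatibility of \eqref{C30} and \eqref{C32}; a Halanay route, if completed by bootstrapping the self-drift under a smallness condition like $\kappa\tau<1$, would naturally produce constants of the form $\kappa\tau/(1-\kappa\tau)$ over a window of width $2\tau$, i.e.\ a different (if qualitatively similar) admissible range for $\tau$. So even granting that a Halanay argument can be made to work, it would not reproduce the stated threshold without further work; as written, the final and decisive step of the proof is missing.
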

\begin{remark}For the case of identical oscillators, i.e., the distribution function $g$ for the natural frequencies is the form of the Dirac measure on $\R$ given unit mass to the point $\Omega_0 \in \R$, $g(\Omega) = \delta_{\Omega_0}(\Omega)$, it is clear the corresponding diameter $D(\Omega) = 0$. This implies that the assumption {\bf (H2)} is not required for the complete frequency synchronization estimate in Theorem \ref{thm_main}.
\end{remark}

\begin{remark}Differentiating the system \eqref{main_eq} with respect to $t$, we obtain
\[
\frac{d}{dt} \omega_i(t) = \frac\kappa N\sum_{k \ne i} \cos(\theta_k(t-\tau) - \theta_i(t)) (\omega_k(t-\tau) - \omega_i(t)), \quad i = 1,\dots, N.
\]
Under our main assumptions {\bf (H1)}-{\bf (H3)}, we also find there exists $t_* > 0$ such that
\[
\lt|\theta_k(t-\tau) - \theta_i(t)\rt| \leq D_* + R_w\tau < D_* + R_w \bar\tau \le  \frac\pi2.
\]
for  $t>t_* $, see \eqref{estimate_below_bis}. Then one can apply \cite[Lemma 3.5]{SPMA12} to conclude the complete frequency synchronization estimate without further assuming \eqref{add_as}. However, in this way, we cannot obtain the exponential decay of the frequency diameter $D(\omega(t))$.
\end{remark}
\begin{remark}\label{rmk_cri2} If $D_{\theta_0} \in (0, \pi/2)$, then $D_* = D_{\theta_0}$ and thus Theorem \ref{thm_main} holds replacing $D_*$ by $D_{\theta_0}$. On the other hand, if $D_{\theta_0} = \pi/2$, then we can show that there exist $\eta$ and $t_{**}>0$ such that
\[
D(\theta(t)) < D_* =\frac\pi2 \quad \mbox{for} \quad t \geq t_{**},
\]
see Remark \ref{rmk_cri} for details. In this case, we can also show the exponential decay of $D(\omega(t))$ under smallness assumptions on the time delay $\tau$; however, it is not clear to express the explicit bound for $\tau$. This is the reason why we restrict the diameter of the initial phases to the interval $(\pi/2,\pi-\delta)$, not including $\pi/2$.
\end{remark}
\begin{remark}\label{rmk_nodelay}If there is no effects of time delay, i.e., $\tau = 0$, then the assumptions {\bf (H3)} and \eqref{add_as} can be removed, and the constant $\delta$ appeared in {\bf (H1)} and {\bf (H2)} can be zero. Furthermore the constant $N/(N-2)$ can be the unity. Then we have the following reduced assumptions:
\begin{itemize}
\item[{\bf (H1)$^\prime$}] The diameter of the initial phases satisfies
\[
D_{\theta_0} \in \lt( \frac\pi2,\,\pi\rt),
\]
\item[{\bf (H2)$^\prime$}] The coupling strength $\kappa >0$ satisfies
\[
\kappa > \frac{D(\Omega)}{\displaystyle 2\sin\lt(\frac{D_{\theta_0}}{2}\rt)\cos\lt( \frac{D_{\theta_0}}{2}\rt)} =  \frac{D(\Omega)}{\sin D_{\theta_0}}.
\]
\end{itemize}
In view of Remark \ref{rmk_cri2}, the assumption {\bf (H1)$^\prime$} can be relaxed as $D_{\theta_*} \in \lt(0,\,\pi\rt)$. This gives exactly the same assumptions for the complete frequency synchronization estimate in \cite{CHJK12}. We refer to \cite{BCM15, CHKK13, CS09, DB11, HKR16} for the complete frequency synchronization estimate in the case of non time delayed interactions.
\end{remark}

The rest of this paper is organized as follows. In Section \ref{sec2}, we estimate the diameter of phase configurations. Under the assumptions  {\bf (H1)}-{\bf (H3)}, we show that the phase diameter becomes less than $\pi/2$ in a finite time. Finally, in Section \ref{sec3}, by constructing a Lyapunov functional we provide the details of proof of Theorem \ref{thm_main} showing the exponential frequency synchronization.

%
%
%
%
\section{Uniform bound estimate of the diameter of phase configurations}\label{sec2}

In this section, we present the uniform-in-time bound estimate of the phase diameter $D(\theta(t))$ for the time delayed Kuramoto oscillators \eqref{main_eq}. Note that $D(\theta(t))$ is not $\mc^1$ in general, and thus we use the upper Dini derivative defined by
\[
\md^+ F(t):= \limsup_{h \to 0+} \frac{F(t+h) - F(t)}{h} \quad \mbox{for a given function $F(t)$}
\]
to take into account the time derivative of the phase diameter function.

\begin{lemma}\label{lem_ineq_d} Let $N>2$ and $\theta = \theta(t)$ be a global classical solution to the system \eqref{main_eq}-\eqref{IC} satisfying {\bf (H1)}-{\bf (H3)}. Then we obtain
\[
D(\theta(t)) \leq  D_{\theta_0} \quad \mbox{for} \quad t \geq 0.
\]
Moreover, we have
\bq\label{ineq_d}
\md^+ D(\theta(t)) \leq D(\Omega) - 2\kappa  \lt(\frac {N-2}N\rt)\sin\lt(\frac{D(\theta(t))}{2} \rt) \cos \lt( \frac{D(\theta(t))}{2} + R_\omega \tau \rt)
\eq
for almost everywhere $t \geq 0$ for which $D(\theta(t))> \delta/2.$
\end{lemma}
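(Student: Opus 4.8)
The plan is to differentiate the phase diameter along a pair of extremal oscillators and to convert the delayed coupling into the present configuration plus a controlled drift. Writing $D(\theta(t))=\max_{1\le i,j\le N}(\theta_i(t)-\theta_j(t))$ as a finite maximum of $\mc^1$ functions, its upper Dini derivative equals the maximum of $\dot\theta_i(t)-\dot\theta_j(t)$ over the pairs realizing the diameter; it therefore suffices to bound $\dot\theta_M(t)-\dot\theta_m(t)$ for an arbitrary extremal pair $M=M(t),\,m=m(t)$ with $\theta_M(t)-\theta_m(t)=D(\theta(t))$. For any such pair $\theta_M$ and $\theta_m$ are the largest and smallest phases, so every $\theta_k(t)$ lies in $[\theta_m(t),\theta_M(t)]$. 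Subtracting the two copies of \eqref{main_eq} and bounding the natural-frequency contribution by $\Omega_M-\Omega_m\le D(\Omega)$ reduces the task to estimating $\frac{\kappa}{N}\big(\sum_{k\ne M}\sin(\theta_k(t-\tau)-\theta_M(t))-\sum_{k\ne m}\sin(\theta_k(t-\tau)-\theta_m(t))\big)$ from above.

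The main step is to estimate this coupling difference. From \eqref{erre} and the mean value theorem, $|\theta_k(t-\tau)-\theta_k(t)|\le R_\omega\tau$ for every $k$, so each delayed phase deviates from its current value by at most $R_\omega\tau$. I would split off the two boundary indices $k=m$ and $k=M$ and treat the remaining $N-2$ indices $k\ne M,m$ via the product-to-sum identity $\sin(\theta_k(t-\tau)-\theta_M)-\sin(\theta_k(t-\tau)-\theta_m)=-2\sin(\frac{D(\theta(t))}{2})\cos(\theta_k(t-\tau)-\frac{\theta_M+\theta_m}{2})$. Because $\theta_k(t)\in[\theta_m(t),\theta_M(t)]$, the cosine argument lies in $[-\frac{D(\theta(t))}{2}-R_\omega\tau,\frac{D(\theta(t))}{2}+R_\omega\tau]$; under the a priori confinement $D(\theta(t))\le D_{\theta_0}<\pi-\delta$ from {\bf (H1)} together with $R_\omega\tau<\delta/2$ from {\bf (H3)}, this interval sits strictly inside $(-\frac\pi2,\frac\pi2)$, so the cosine is bounded below by $\cos(\frac{D(\theta(t))}{2}+R_\omega\tau)>0$ and each interior term is at most $-2\sin(\frac{D(\theta(t))}{2})\cos(\frac{D(\theta(t))}{2}+R_\omega\tau)$. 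As for the two boundary terms, the hypothesis $D(\theta(t))>\delta/2$ is exactly what I need: since $R_\omega\tau<\delta/2<D(\theta(t))$, the argument $\theta_m(t-\tau)-\theta_M(t)$ lies in $(-\pi,0)$ and $\theta_M(t-\tau)-\theta_m(t)$ in $(0,\pi)$, so $\sin(\theta_m(t-\tau)-\theta_M(t))-\sin(\theta_M(t-\tau)-\theta_m(t))\le0$. Summing the $N-2$ interior contributions, multiplying by $\kappa/N$, and adding the frequency bound $\Omega_M-\Omega_m\le D(\Omega)$ then yields \eqref{ineq_d}.

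It remains to justify the a priori confinement $D(\theta(t))\le D_{\theta_0}$, which I would obtain by a continuity argument from \eqref{ineq_d} itself. The derivation above is valid at every $t$ for which the phases stay within $D_{\theta_0}$, and at the threshold value $D(\theta(t))=D_{\theta_0}$ the right-hand side of \eqref{ineq_d} equals $D(\Omega)-2\kappa\frac{N-2}{N}\sin(\frac{D_{\theta_0}}{2})\cos(\frac{D_{\theta_0}}{2}+R_\omega\tau)$, which is strictly negative by \eqref{used} (a consequence of {\bf (H2)} and {\bf (H3)}). Setting $T^*:=\sup\{T\ge0:D(\theta(t))\le D_{\theta_0}\ \text{for all}\ t\in[0,T]\}$, if $T^*<\infty$ then $D(\theta(T^*))=D_{\theta_0}$ by continuity while the definition of $T^*$ forces $\md^+D(\theta(T^*))\ge0$, contradicting the strictly negative bound just obtained; hence $T^*=\infty$. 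I expect the delay to be the principal obstacle: in contrast with the case $\tau=0$, where each $\theta_k(t)$ sits inside the current arc $[\theta_m(t),\theta_M(t)]$ and the coupling is manifestly dissipative, the delayed phases $\theta_k(t-\tau)$ may leave that arc, and the argument hinges on absorbing the drift $R_\omega\tau$ into the cosine while keeping $\frac{D(\theta(t))}{2}+R_\omega\tau<\frac\pi2$ and the boundary terms correctly signed — precisely the content of the smallness conditions {\bf (H1)}--{\bf (H3)}. The apparent circularity between \eqref{ineq_d} and the confinement bound is exactly what the continuity argument resolves.
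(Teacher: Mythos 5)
Your proof is correct and follows essentially the same route as the paper: the same split of the coupling difference into the $N-2$ interior terms, handled by the product-to-sum identity with the cosine argument confined below $\pi/2$ via the drift bound $R_\omega\tau<\delta/2$, the same use of the hypothesis $D(\theta(t))>\delta/2$ to sign the two boundary terms, and the same continuity argument (threshold time plus strict negativity of the right-hand side of \eqref{ineq_d} at $D=D_{\theta_0}$ via \eqref{used}) for the a priori bound $D(\theta(t))\le D_{\theta_0}$. The only cosmetic difference is that you sign the boundary contribution by locating $\theta_m(t-\tau)-\theta_M(t)$ in $(-\pi,0)$ and $\theta_M(t-\tau)-\theta_m(t)$ in $(0,\pi)$ directly, whereas the paper applies a second product-to-sum identity to the same expression; the two arguments are equivalent.
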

\begin{proof} {\bf $\bullet$ (Step A)} Due to the continuity of the phase trajectories $\theta_i(t)$,
there is an at most countable system of open, mutually disjoint
intervals $\{\mathcal{I}_\sigma\}_{\sigma\in\N}$ such that
$$
   \bigcup_{\sigma\in\N} \overline{\mathcal{I}_\sigma} = [0,\infty)
$$
and for each ${\sigma\in\N}$ there exist indices $i = i(\sigma)$ and $j = j(\sigma)$ such that
$$
D(\theta(t)) = \theta_i(t) - \theta_j(t) \quad\mbox{for } t\in \mathcal{I}_\sigma.
$$
A straightforward computation gives
\begin{align}\label{est_d}
\begin{aligned}
\md^+ D(\theta(t))\big|_{t=0} &= \md^+(\theta_i(t) - \theta_j(t) )\big\vert_{t=0} \cr
&= \Omega_i  - \Omega_j + \frac\kappa N \sum_{k\ne i} \sin(\theta_k(- \tau) - \theta_i(0)) - \sum_{k\ne j}\sin(\theta_k(- \tau) - \theta_j(0)) \cr
&=\Omega_i  - \Omega_j + \frac\kappa N \sum_{k\ne i, j} \lt(\sin(\theta_k(- \tau) - \theta_i(0)) - \sin(\theta_k(- \tau) - \theta_j(0))\rt) \cr
&\quad
+\frac\kappa N\lt(
\sin(\theta_j(-\tau)-\theta_i(0))-\sin(\theta_i(-\tau)-\theta_j(0))
\rt)\cr
&= \Omega_i  - \Omega_j - \frac{2\kappa}{N} \sum_{k\ne i, j} \cos\lt(\theta_k(- \tau) - \frac{\theta_i(0) + \theta_j(0)}{2} \rt)\sin\lt(\frac{D_{\theta_0}}{2} \rt) \cr
&\quad
+\frac\kappa N\lt(
\sin(\theta_j(-\tau)-\theta_i(0))-\sin(\theta_i(-\tau)-\theta_j(0))
\rt).
\end{aligned}
\end{align}
On the other hand, we find
\bq\label{richiamo}
\theta_k(- \tau) = \theta_k(0) - \int_{-\tau}^0 \frac{d}{ds}\theta_k(s)\,ds \quad \mbox{and} \quad  \lt|\int_{-\tau}^0 \frac{d}{ds}\theta_k(s)\,ds\rt| \leq R_\omega \tau,
\eq
due to \eqref{erre}. Thus we obtain
\[
\lt| \theta_k(-\tau) - \frac{\theta_i(0) + \theta_j(0)}{2} \rt| \leq \frac{D_{\theta_0}}{2} + R_\omega \tau \quad \mbox{for all} \quad k \in \{1,\dots,N\}.
\]
It follows from {\bf (H1)} and {\bf (H2)} that
\[
\frac{D_{\theta_0}}{2} + R_\omega \tau < \frac{\pi - \delta}{2} + R_\omega\tau < \frac\pi2,
\]
and this asserts
\bq\label{as01}
\cos\lt(\theta_k(- \tau) - \frac{\theta_i(0) + \theta_j(0)}{2} \rt) \geq \cos \lt( \frac{D_{\theta_0}}{2} + R_\omega \tau \rt).
\eq
Thus we get
$$\begin{aligned}
&- \frac{2\kappa}{N} \sum_{k\ne i, j} \cos\lt(\theta_k(- \tau) - \frac{\theta_i(0) + \theta_j(0)}{2} \rt)\sin\lt(\frac{D_{\theta_0}}{2} \rt) \cr
&\qquad \leq - 2\kappa \lt(\frac {N-2}N\rt) \sin\lt(\frac{D_{\theta_0}}{2} \rt) \cos \lt( \frac{D_{\theta_0}}{2} + R_\omega \tau \rt).
\end{aligned}$$
Moreover, we observe that
\begin{align}\label{additional}
\begin{aligned}
&\sin(\theta_j(-\tau)-\theta_i(0))-\sin(\theta_i(-\tau)-\theta_j(0))\cr
&\quad = - 2\cos\lt( \frac {\theta_i(-\tau)-\theta_i(0)} 2 +  \frac {\theta_j(-\tau)-\theta_j(0)} 2\rt)\,
\sin \lt(\frac {\theta_i(-\tau )-\theta_j(-\tau )} 2 +  \frac {\theta_i(0)-\theta_j(0)} 2 \rt).
\end{aligned}
\end{align}
Now, we get from \eqref{richiamo} that
$$\left\vert
\frac {\theta_i(-\tau)-\theta_i(0)} 2 +  \frac {\theta_j(-\tau)-\theta_j(0)} 2\right\vert \le \frac {R_\omega\tau} 2 + \frac {R_\omega\tau} 2=R_\omega\tau <\frac \delta 2,$$
which implies
\bq\label{cosinus}
\cos\lt( \frac {\theta_i(-\tau)-\theta_i(0)} 2 +  \frac {\theta_j(-\tau)-\theta_j(0)} 2\rt) >0.
\eq
Also, we have
$$
\frac {\theta_i(-\tau)-\theta_j(-\tau )}2 = \frac {D_{\theta_0}} 2 +\frac 12 \int_0^{-\tau}  \frac d {ds} (\theta_i(s) - \theta_j(s)) \,ds,
 $$
which gives
$$
\frac {\theta_i(-\tau)-\theta_j(-\tau )}2 + \frac {\theta_i(0)-\theta_j(0)}2 = D_{\theta_0} + \frac 12 \int_0^{-\tau} \frac d {ds} (\theta_i(s) -\theta_j(s)) \,ds.
$$
On the other hand, we find
\[
\lt| \frac 12 \int_0^{-\tau} \frac d {ds} (\theta_i(s) -\theta_j(s)) \,ds\rt| \leq R_\omega \tau < \frac\delta2 \quad \mbox{and} \quad \frac\pi2 < D_{\theta_0} < \pi - \delta.
\]
This yields
$$
0< \frac \pi 2 -\frac \delta 2 <\frac {\theta_i(-\tau)-\theta_j(-\tau )}2 + \frac {\theta_i(0)-\theta_j(0)}2 < \pi-\frac \delta 2.$$
Thus, we obtain
\bq\label{sinus}
\sin \lt(\frac {\theta_i(-\tau )-\theta_j(-\tau )} 2 +  \frac {\theta_i(0)-\theta_j(0)} 2 \rt) >0.
\eq
By using \eqref{cosinus} and \eqref{sinus} in \eqref{additional}, we deduce
$$
\sin(\theta_j(-\tau)-\theta_i(0))-\sin(\theta_i(-\tau)-\theta_j(0))<0.
$$
This and \eqref{as01}, used in \eqref{est_d}, yield
\[
\md^+ D(\theta(t))\big|_{t=0} \leq D(\Omega) - 2\kappa \lt(\frac {N-2}N\rt) \sin\lt(\frac{D_{\theta_0}}{2} \rt) \cos \lt( \frac{D_{\theta_0}}{2} + R_\omega \tau \rt) < 0,
\]
where for the last inequality we used \eqref{used}. \newline

{\bf $\bullet$ (Step B)} From {\bf (Step A)}, we have that the $D(\theta(t))$ strictly starts to decrease at $t = 0+$. If there is a $t_0 > 0$ such that
\[
D(\theta(t_0)) = D_{\theta_0} \quad \mbox{and} \quad  D(\theta(t)) < D_{\theta_0}\quad \mbox{for} \quad t < t_0,
\]
then the following must hold
\[
\md^+ D(\theta(t))\big|_{t = t_0 -} \geq 0.
\]
On the other hand, in a similar fashion as the above, at that time, we get
$$\begin{aligned}
 0 \leq \md^+ D(\theta(t))\big|_{t=t_0-} &\leq D(\Omega) - 2\kappa \lt(\frac {N-2}N\rt)\sin\lt(\frac{D(\theta(t_0))}{2} \rt) \cos \lt( \frac{D(\theta(t_0))}{2} + R_\omega \tau \rt)\cr
&= D(\Omega) - 2\kappa \lt(\frac {N-2}N\rt) \sin\lt(\frac{D_{\theta_0}}{2} \rt) \cos \lt( \frac{D_{\theta_0}}{2} + R_\omega \tau \rt) < 0.
\end{aligned}$$
This leads a contradiction. Hence we have
\[
D(\theta(t)) \leq D_{\theta_0} \quad \mbox{for} \quad t \geq 0.
\]
The differential inequality \eqref{ineq_d} just follows from the above computations until $D(\theta (t))$ remains greater than $\delta/2$.
Indeed, being $D(\theta (t))<\pi-\delta,\ t>0,$ one can obtain the analogous of \eqref{as01} and \eqref{cosinus} for each fixed  $t>0$
instead of $t=0.$ Moreover, until $D(\theta(t))> \delta/2$ one can also deduce the analogous of \eqref{sinus}, namely
\[
\sin \lt(\frac {\theta_i(t-\tau )-\theta_j(t-\tau )} 2 +  \frac {\theta_i(t)-\theta_j(t)} 2 \rt) >0.
\]
In fact, this follows from
$$
\frac {\theta_i(t-\tau)-\theta_j(t-\tau )}2 + \frac {\theta_i(t)-\theta_j(t)}2 = D(\theta(t)) + \frac 12 \int_t^{t-\tau} \frac d {ds} (\theta_i(s) -\theta_j(s))\, ds,
$$
which allows to deduce
$$
0= \frac \delta 2 -\frac \delta 2 <\frac {\theta_i(t-\tau)-\theta_j(t-\tau )}2 + \frac {\theta_i(t)-\theta_j(t)}2 < \pi-\frac \delta 2.$$
\end{proof}

In the following proposition, we show that any phase configurations satisfying the assumptions {\bf (H1)}-{\bf (H3)} will be shrink to a smaller set whose diameter is in the range $(0,\pi/2)$ in a finite time. Note that this observation is crucial to apply \cite[Lemma 3.5]{SPMA12} to have the complete frequency synchronization estimate.

\begin{proposition}\label{prop_key} Let $N>2$ and $\theta = \theta(t)$ be a global classical solution to the system \eqref{main_eq}-\eqref{IC} satisfying {\bf (H1)}-{\bf (H3)}. Then there exists $t_* \geq 0$ such that
\[
D(\theta(t)) <  D_* \quad \mbox{for all} \quad t  > t_*,
\]
where $D_*$ is the dual angle of the initial phase diameter $D_{\theta_0}$  defined in \eqref{Dstar}.
\end{proposition}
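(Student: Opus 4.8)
The plan is to reduce the statement to a scalar comparison argument for the diameter $t\mapsto D(\theta(t))$ built on the closed differential inequality \eqref{ineq_d} from Lemma \ref{lem_ineq_d}. Writing
\begin{equation*}
f(D):= D(\Omega) - 2\kappa\left(\frac{N-2}{N}\right)\sin\left(\frac D2\right)\cos\left(\frac D2 + R_\omega\tau\right),
\end{equation*}
Lemma \ref{lem_ineq_d} says $\md^+ D(\theta(t))\le f(D(\theta(t)))$ at a.e.\ $t$ with $D(\theta(t))>\delta/2$, and also $D(\theta(t))\le D_{\theta_0}$ for all $t\ge0$. Recalling that for $D_{\theta_0}\in(\pi/2,\pi-\delta)$ the dual angle is $D_*=\arcsin(\sin D_{\theta_0})=\pi-D_{\theta_0}$, assumption \textbf{(H1)} gives $D_*=\pi-D_{\theta_0}>\delta>\delta/2$, so \eqref{ineq_d} is available on the whole range $[D_*,D_{\theta_0}]$. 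The entire proposition will then follow once I show that $f<0$ on $[D_*,D_{\theta_0}]$.

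The crux is this uniform negativity. The product-to-sum identity gives
\begin{equation*}
\sin\left(\frac D2\right)\cos\left(\frac D2 + R_\omega\tau\right) = \frac12\left(\sin(D+R_\omega\tau) - \sin(R_\omega\tau)\right),
\end{equation*}
so that, up to the additive constant $\sin(R_\omega\tau)$, the coupling term depends on $D$ only through $\sin(D+R_\omega\tau)$. For $D\in[D_*,D_{\theta_0}]$ the argument $D+R_\omega\tau$ stays inside $(0,\pi)$ --- indeed $D_{\theta_0}+R_\omega\tau<\pi-\delta/2$ by \textbf{(H1)} and \textbf{(H3)} --- where $\sin$ is concave, so $D\mapsto \sin(D+R_\omega\tau)$ is concave on $[D_*,D_{\theta_0}]$ and attains its minimum at one of the two endpoints. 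A short computation using $D_*=\pi-D_{\theta_0}$ identifies that endpoint:
\begin{equation*}
\sin(D_*+R_\omega\tau)-\sin(D_{\theta_0}+R_\omega\tau)=\sin(D_{\theta_0}-R_\omega\tau)-\sin(D_{\theta_0}+R_\omega\tau)=-2\cos(D_{\theta_0})\sin(R_\omega\tau)\ge 0,
\end{equation*}
since $\cos D_{\theta_0}<0$ for $D_{\theta_0}>\pi/2$ and $\sin(R_\omega\tau)\ge 0$. Hence the coupling term is minimized over $[D_*,D_{\theta_0}]$ at $D=D_{\theta_0}$, which makes $f$ maximized there, so $f(D)\le f(D_{\theta_0})$ for every $D\in[D_*,D_{\theta_0}]$; and $f(D_{\theta_0})<0$ is precisely the rearranged hypothesis \eqref{used}. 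Thus $f(D)\le f(D_{\theta_0})=:-c<0$ on $[D_*,D_{\theta_0}]$. The role of the dual angle $D_*$ is exactly to guarantee that the coupling-beats-disorder bound \textbf{(H2)} keeps holding all the way down to $D_*$.

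With the uniform bound $\md^+ D(\theta(t))\le -c$ valid whenever $D(\theta(t))\in[D_*,D_{\theta_0}]$, the conclusion comes in two steps. Because each $\theta_i$ is $\mc^1$ with derivative bounded by $R_\omega$, the map $t\mapsto D(\theta(t))$ is Lipschitz, hence absolutely continuous, so the a.e.\ Dini bound integrates: as long as $D(\theta(t))\ge D_*$ one has $D(\theta(t))\le D_{\theta_0}-ct$, which forces a finite first hitting time $t_*\le (D_{\theta_0}-D_*)/c$ with $D(\theta(t_*))=D_*$. For $t>t_*$ I close with an invariance argument: if $D(\theta(\cdot))$ ever exceeded $D_*$ after $t_*$, say $D(\theta(t_2))>D_*$, let $t_1$ be the last time in $[t_*,t_2]$ at which $D(\theta(t_1))=D_*$; on $(t_1,t_2]$ the diameter lies in $(D_*,D_{\theta_0}]$, where $f<0$ gives $\md^+ D(\theta(t))<0$ a.e.\ and hence $D(\theta(\cdot))$ non-increasing, contradicting $D(\theta(t_2))>D(\theta(t_1))$. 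Strictness for $t>t_*$ follows from $f(D_*)<0$: the same band argument, applied on a one-sided neighborhood of $D_*$ where $f$ stays negative by continuity, prevents $D(\theta(\cdot))$ from returning to the level $D_*$.

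I expect the only real difficulty to lie in the second step. Since $D\mapsto \sin(D/2)\cos(D/2+R_\omega\tau)$ is not monotone on $[D_*,D_{\theta_0}]$ (by \textbf{(H3)} its argument crosses the maximizer $\pi/2$ of $\sin$), one cannot reduce $f<0$ to a single endpoint evaluation; it is the concavity of $\sin$ together with the dual-angle identity $D_*=\pi-D_{\theta_0}$ that pins the minimum of the coupling term to $D=D_{\theta_0}$ and thereby upgrades \eqref{used} to negativity of $f$ on the entire interval. The finite-time arrival and invariance steps are then routine consequences of the Lipschitz regularity of $D(\theta(\cdot))$ and the differential inequality \eqref{ineq_d}.
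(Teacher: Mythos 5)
Your proof is correct. The first half---showing that the right-hand side of \eqref{ineq_d} is bounded above by one negative constant on all of $[D_*,D_{\theta_0}]$---is in substance the paper's computation \eqref{est_coupling}: the paper expands $2\sin(D/2)\cos(D/2+R_\omega\tau)=\sin D\,\cos(R_\omega\tau)-2\sin^2(D/2)\sin(R_\omega\tau)$ and uses $\sin D\ge \sin D_*=\sin D_{\theta_0}$ together with $\sin(D/2)\le\sin(D_{\theta_0}/2)$, which is your ``the concave function $D\mapsto\sin(D+R_\omega\tau)$ attains its minimum over $[D_*,D_{\theta_0}]$ at the endpoint $D_{\theta_0}$'' packaged differently; both reduce negativity on the whole interval to the single evaluation \eqref{used}. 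Where you genuinely diverge is the forward-invariance step. The paper introduces the continuation set $\mathcal{S}$, assumes the exit time $T^*$ from $\{D<D_*\}$ is finite, and runs a Gr\"onwall comparison based on the secant bound $\sin D\ge (\sin D_*/D_*)\,D$ to build an explicit exponential barrier $D(\theta(t))\le D_*-(D_*-D(\theta(t_*^1)))e^{-\lambda (t-t_*^1)}$ contradicting $D(\theta(t))\to D_*$. Your last-crossing argument is more elementary: on any excursion into $(D_*,D_{\theta_0}]$, or into a one-sided band $[D_*-\eps,D_*]$ (both of which stay above $\delta/2$ because $D_*>\delta$), the drift is uniformly negative, so the Lipschitz function $D(\theta(\cdot))$ cannot increase there and the level $D_*$ is never re-attained. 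This dispenses with the secant linearization and never invokes \eqref{ineq_d} below the threshold $\delta/2$; what the paper's Gr\"onwall route buys in exchange is a quantitative exponential rate of approach to $D_*$, which is not needed for the proposition as stated.
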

\begin{proof}
Note that $D_*>\delta$ due to {\bf (H1)}. If $D(\theta(t)) \in [D_*, D_{\theta_0}]$ for $t\geq 0$, then it follows from Lemma \ref{lem_ineq_d} that
\bq\label{est_d2}
\md^+ D(\theta(t)) \leq D(\Omega) - 2\kappa \lt(\frac {N-2}N\rt) \sin\lt(\frac{D(\theta(t))}{2} \rt) \cos \lt( \frac{D(\theta(t))}{2} + R_\omega \tau \rt).	
\eq
On the other hand, we get
\begin{align}\label{est_coupling}
\begin{aligned}
& 2\sin\lt(\frac{D(\theta(t))}{2} \rt) \cos \lt( \frac{D(\theta(t))}{2} + R_\omega \tau \rt) \cr
&\quad = 2\sin\lt(\frac{D(\theta(t))}{2} \rt) \lt( \cos\lt( \frac{D(\theta(t))}{2} \rt) \cos (R_\omega \tau) - \sin \lt( \frac{D(\theta(t))}{2}\rt) \sin (R_\omega \tau) \rt)\cr
&\quad = \sin D(\theta(t)) \cos (R_\omega \tau) - 2\sin^2\lt( \frac{D(\theta(t))}{2}\rt)  \sin (R_\omega \tau) \cr
&\quad \geq \sin D_{\theta_0} \cos (R_\omega \tau) - 2\sin^2\lt( \frac{D_{\theta_0}}{2}\rt)  \sin (R_\omega \tau) \cr
&\quad = 2\sin\lt(\frac{D_{\theta_0}}{2} \rt) \cos \lt( \frac{D_{\theta_0}}{2} + R_\omega \tau \rt),
\end{aligned}
\end{align}
where we used the fact that
\[
\sin D(\theta(t)) \geq \sin D_* = \sin D_{\theta_0} \quad \mbox{and} \quad \sin \lt( \frac{D(\theta(t))}{2}\rt) \leq \sin \lt( \frac{D_{\theta_0}}{2} \rt)
\]
for $D(\theta(t)) \in [D_*, D_{\theta_0}]$. This together with \eqref{est_d2} yields
\[
\md^+ D(\theta(t)) \leq  D(\Omega) - 2 \kappa \lt(\frac {N-2}N\rt) \sin\lt(\frac{D_{\theta_0}}{2} \rt) \cos \lt( \frac{D_{\theta_0}}{2} + R_\omega \tau \rt) < 0,
\]
due to $\eqref{used}$. In particular, the above differential inequality provides
\[
D(\theta(t)) \leq D_{\theta_0} + \lt(D(\Omega) - 2 \kappa\lt(\frac {N-2}N\rt) \sin\lt(\frac{D_{\theta_0}}{2} \rt) \cos \lt( \frac{D_{\theta_0}}{2} + R_\omega \tau \rt)\rt)t.
\]
Since the right hand side of the above inequality goes to $-\infty$ as $t \to \infty$, there should be a $t_* > 0$ such that $D(\theta(t))$ leaves the interval $[D_*, D_{\theta_0}]$ after that time $t_*$. Without loss of generality we  assume that $D(\theta (t_*)) <D_*.$ We next show that
\[
D(\theta(t)) <  D_* < \frac\pi2 \quad  \mbox{for all} \quad t \geq t_*.
\]
For this, we use the standard continuity argument. Let us define a set $\mathcal{S}$ by
\[
\mathcal{S} := \lt\{ T > t_* : D(\theta(t)) < D_* \quad \mbox{for} \quad t \in [t_*, T) \rt\}.
\]
Since $\mathcal{S} \neq \emptyset$, by continuity of $D(\theta(t))$, we can consider $T^* := \sup \mathcal{S} > 0$, and it holds $D(\theta(t)) < D_*$ for $t \in [t_*, T^*)$. We then show that $T^* = +\infty$. Suppose that $T^* < \infty$, then we find
\[
\lim_{t \to T^*-} D(\theta(t)) = D_*.
\]
In particular, there exists $t^1_*\ge t_*$ such that
$$
D(\theta (t))>\frac \delta 2 \quad  \mbox{for all} \quad t \in [t^1_*, T^*),
$$
due to the fact $D_* > \delta > \delta/2$. Note that
\[
\frac{\sin D_*}{D_*} < \frac{\sin D(\theta(t))}{D(\theta(t))} \quad \mbox{for all} \quad t \in [t_*, T^*).
\]
Combining this, \eqref{est_d2} and \eqref{est_coupling}, we obtain for $t\in [t^1_*, T^*),$
$$\begin{aligned}
&\md^+ D(\theta(t)) \cr
&\quad \leq D(\Omega) -\kappa \lt(\frac {N-2}N\rt)\lt(\sin D(\theta(t)) \cos (R_\omega \tau) - 2\sin^2\lt( \frac{D(\theta(t))}{2}\rt)  \sin (R_\omega \tau)\rt) \cr
&\quad \leq D(\Omega) -\kappa \lt(\frac {N-2}N\rt)\lt( \lt(\frac{\sin D_*}{D_*}\rt) \cos (R_\omega \tau) D(\theta(t)) - 2\sin^2\lt( \frac{D_{\theta_0}}{2}\rt)  \sin (R_\omega \tau) \rt).
\end{aligned}$$
We now apply Gr\"onwall's lemma to previous estimate to get, for every $t\in [t^1_*, T^*),$
$$\begin{aligned}
D(\theta(t)) &\leq D(\theta (t_*^1)) \exp\lt( - \kappa\lt(\frac {N-2}N\rt) \frac{\sin D_*}{D_*} \cos (R_\omega \tau)(t-t_*^1) \rt)\cr
&\quad + \frac{\displaystyle D(\Omega) + 2\kappa\lt(\frac {N-2}N\rt)\sin^2\lt( \frac{D_{\theta_0}}{2}\rt)  \sin (R_\omega \tau) }{ \displaystyle \kappa \lt(\frac {N-2}N\rt) \lt(\frac{\sin D_*}{D_*}\rt) \cos (R_\omega \tau)} \cr
&\qquad \times \lt(1 -  \exp\lt( - \kappa \lt(\frac {N-2}N\rt)  \frac{\sin D_*}{D_*} \cos (R_\omega \tau)(t-t_*^1) \rt) \rt).
\end{aligned}$$
Note that
$$\begin{aligned}
&\kappa > \lt(\frac {N} {N-2}\rt) \frac{D(\Omega)}{2\sin\lt(\frac{D_{\theta_0}}{2}\rt)\cos\lt( \frac{D_{\theta_0}}{2} + R_\omega \tau\rt)} \cr
& \Longleftrightarrow \quad  D(\Omega) < \kappa\lt(\frac {N-2}N\rt)\lt( \sin D_{\theta_0} \cos (R_\omega \tau) - 2\sin^2\lt( \frac{D_{\theta_0}}{2}\rt)  \sin (R_\omega \tau)\rt) \cr
& \Longleftrightarrow \quad D(\Omega) + 2 \kappa \lt(\frac {N-2}N\rt)\sin^2\lt( \frac{D_{\theta_0}}{2}\rt)  \sin (R_\omega \tau) < \kappa \lt(\frac {N-2}N\rt)\sin D_* \cos (R_\omega \tau) \cr
& \Longleftrightarrow \quad \frac{\displaystyle D(\Omega) + 2\kappa \lt(\frac {N-2}N\rt) \sin^2\lt( \frac{D_{\theta_0}}{2}\rt)  \sin (R_\omega \tau) }{
\displaystyle \kappa\lt(\frac {N-2}N\rt)\frac
{\sin D_*}{D_*} \cos (R_\omega \tau)} < D_*,
\end{aligned}$$
where we used $\sin D_{\theta_0} = \sin D_*$. Thus we have
$$\begin{aligned}
D(\theta(t)) & < D(\theta (t_*^1)) \exp \lt( - \kappa \lt(\frac {N-2}N\rt) \lt(\frac{\sin D_*}{D_*}\rt) \cos (R_\omega \tau)(t-t^1_*) \rt) \cr
&\quad + D_* \lt(1 -  \exp\lt( - \kappa \lt(\frac {N-2}N\rt)\lt(\frac{\sin D_*}{D_*} \rt)\cos (R_\omega \tau)(t-t^1_*) \rt) \rt)\cr
&= D_* - (D_* -  D(\theta(t^1_*)))\exp \lt( -  \kappa \lt(\frac {N-2}N\rt)\lt(\frac{\sin D_*}{D_*}\rt) \cos (R_\omega \tau)(t-t^1_*) \rt).
\end{aligned}$$
We now let $t \to T^*-$ to the above inequality to find
$$\begin{aligned}
D_* &= \lim_{t \to T^*-} D(\theta(t)) \cr
&\leq D_* - (D_* -  D(\theta (t_*^1)))\exp \lt( -  \kappa\lt(\frac {N-2}N\rt)\lt(\frac{\sin D_*}{D_*}\rt) \cos (R_\omega \tau)(T^*-t_*^1) \rt)\cr
&< D_*.
\end{aligned}$$
This is a contradiction, and thus $T^* = +\infty$. Then, we conclude that
\[
D(\theta(t)) < D_* < \frac\pi2 \quad \mbox{for all} \quad t \geq t_*,
\]
for some $t_* > 0$. This completes the proof.
\end{proof}

\begin{remark}\label{rmk_cri}  For the case $D_{\theta_0
} = \pi/2$, assume {\bf (H2)} with a constant $\delta\in (0, \frac \pi 2)$ and let us take
$$\tau<\bar \tau := \frac \delta {2R_\omega}.$$
Then, it results
$$\kappa >
\lt(\frac {N} {N-2}\rt) \frac{D(\Omega)}{\cos (R_\omega \tau) - \sin (R_\omega \tau)}.$$
We can show that there exist $\eta > 0$ and $t_{**}>0$ such that
\[
D(\theta(t)) \leq \eta < \frac\pi2 \quad \mbox{for all} \quad t \geq t_{**}.
\]
More precisely, by using the same argument as in the proof of Lemma \ref{lem_ineq_d}, we find
\[
\md^+ D(\theta(t))\big|_{t=0} < 0.
\]
This means that there exists a $0 < t_{**} \ll 1$ such that
\[
D(\theta(t_{**})) < \frac\pi2.
\]
Note that we can choose $\eta \in (D(\theta(t_{**})), \pi/2)$ such that
\[
\tan(R_\omega \tau) > \frac{1 - \sin \eta}{\cos \eta}
\]
since
\[
\lim_{\eta \to \frac\pi2-} \frac{1 - \sin \eta}{\cos \eta} = 0.
\]
On the other hand, the above inequality is equivalent to
\[
\sin(R_\omega \tau)\lt(1 - 2\sin^2 \frac\eta2\rt) > \cos(R_\omega \tau) (1 - \sin \eta),
\]
i.e.,
\[
\sin \eta \cos (R_\omega \tau) - 2\sin^2 \frac\eta2 \sin (R_\omega \tau) > \cos (R_\omega \tau) - \sin (R_\omega \tau).
\]
This again implies
$$\begin{aligned}
\kappa > \lt(\frac {N} {N-2}\rt) \frac{D(\Omega)}{\cos (R_\omega \tau) - \sin (R_\omega \tau)} &>\lt(\frac {N} {N-2}\rt)\frac{D(\Omega)}{\displaystyle \sin \eta \cos (R_\omega \tau) - 2\sin^2 \lt(\frac\eta2\rt) \sin (R_\omega \tau)}\cr
&= \lt(\frac {N} {N-2}\rt)\frac{D(\Omega)}{\displaystyle 2\sin\lt(\frac\eta2\rt)\cos\lt( \frac\eta2 + R_\omega \tau\rt)}.
\end{aligned}$$
We finally use the same argument as in Lemma \ref{lem_ineq_d} to conclude
\[
D(\theta(t)) \leq \eta < \frac\pi2 \quad \mbox{for all} \quad t \geq t_{**}.
\]
\end{remark}

\begin{remark}\label{N2}
In the case of only $2$ oscillators, instead of {\bf (H2)} one can assume
\begin{equation}\label{per2}
\kappa >\frac {D(\Omega)} {\cos (R_\omega\tau)\,\sin (D_{\theta_0}+R_\omega\tau )}.
\end{equation}
Without loss of generality, let us suppose $D_{\theta_0} = \theta_1(0)-\theta_2(0).$ Then, we have
\begin{equation}\label{prima}
\md^+ D(\theta(t))\vert_{t=0} = D(\Omega)+\frac \kappa 2 (\sin (\theta_2(-\tau)-\theta_1(0)) -\sin (\theta_1(-\tau)-\theta_2(0))).
\end{equation}
Now, observe that
\begin{equation}\label{N2uno}
\begin{array}{l}
\displaystyle {\sin (\theta_2(-\tau)-\theta_1(0)) -\sin (\theta_1(-\tau)-\theta_2(0))}\\
\displaystyle{
\ \ =- 2\cos\lt( \frac {\theta_1(-\tau)-\theta_1(0)} 2 +  \frac {\theta_2(-\tau)-\theta_2(0)} 2\rt)\,
\sin \lt(\frac {\theta_1(-\tau )-\theta_2(-\tau )} 2 +  \frac {\theta_1(0)-\theta_2(0)} 2 \rt).}
\end{array}
\end{equation}
It is easy to see that
\begin{equation}\label{11}
\cos\lt( \frac {\theta_1(-\tau)-\theta_1(0)} 2 +  \frac {\theta_2(-\tau)-\theta_2(0)} 2\rt)>\cos (R_\omega\tau).
\end{equation}
Moreover, we obtain
\begin{equation}\label{12}
\sin \lt(\frac {\theta_1(-\tau )-\theta_2(-\tau )} 2 +  \frac {\theta_1(0)-\theta_2(0)} 2 \rt) >\sin (D_{\theta_0}+R_\omega \tau).
\end{equation}
Indeed, we find
$$\frac {\theta_1(-\tau) -\theta_2(-\tau)} 2= \frac {D_{\theta_0}} 2 +\frac 12 \int_0^{-\tau} \frac d {ds} (\theta_1(s)-\theta_2(s))\, ds,$$
and then
$$\frac {\theta_1(-\tau) -\theta_2(-\tau)} 2+ \frac {\theta_1(0) -\theta_2(0)} 2 = D_{\theta_0}+\frac 12 \int_0^{-\tau} \frac d {ds} (\theta_1(s)-\theta_2(s))\, ds.$$
On the other hand, we use \eqref{erre} to obtain
\[
\lt| \frac 12 \int_0^{-\tau} \frac d {ds} (\theta_1(s) -\theta_2(s)) \,ds\rt| \leq R_\omega \tau < \frac\delta2 \quad \mbox{and} \quad \frac\pi2 < D_{\theta_0} < \pi - \delta.
\]
This yields
\begin{equation}\label{13}
 0<D_{\theta_0}- R_\omega \tau<\frac {\theta_1(-\tau)-\theta_2(-\tau )}2 + \frac {\theta_1(0)-\theta_2(0)}2 < D_{\theta_0}+R_\omega \tau <\pi .
 \end{equation}
Since, by {\bf (H3)}, $R_\omega\tau <\frac \pi 2 - D_*
,$
 it then holds
$$D_{\theta_0}- R_\omega\tau > D_{\theta_0}-\frac \pi 2 +D_*> D_*,$$
which, together with \eqref{13} gives \eqref{12}.
Therefore, using  inequalities \eqref{11} and \eqref{12} in \eqref{N2uno}, we have
$$
\sin (\theta_2(-\tau)-\theta_1(0)) -\sin (\theta_1(-\tau)-\theta_2(0))\le -2 \cos (R_\omega\tau) \sin (D_{\theta_0}+R_\omega \tau),
$$
that allows to deduce, from  \eqref{prima},
\begin{equation*}
\md^+ D(\theta(t))\vert_{t=0} \le D(\Omega) -\kappa \cos (R_\omega\tau) \sin (D_{\theta_0}+R_\omega \tau).
\end{equation*}
So, thanks to \eqref{per2}, one can argue similarly to the case of $N>2$ oscillators proving that
$$D(\theta (t))<D_* \quad\mbox{for}\quad t>t_*,$$
for a suitable time $t_*>0.$
\end{remark}

%
%
%
%
%
\section{Asymptotic frequency synchronization estimate: Proof of Theorem \ref{thm_main}}\label{sec3}
In this section, we provide the details of the proof of Theorem \ref{thm_main}. We estimate the frequency diameter function $D(\omega(t))$ to show the exponential frequency synchronization. For this, we differentiate the system \eqref{main_eq} with respect to time $t$ to find
\begin{equation}\label{eq_omega}
\frac{d}{dt} \omega_i(t) = \frac\kappa N\sum_{k \ne i} \cos(\theta_k(t-\tau) - \theta_i(t)) (\omega_k(t-\tau) - \omega_i(t)), \quad i = 1,\dots, N, \quad t > 0.
\end{equation}
It follows from Proposition \ref{prop_key} that there exists $t_* > 0$ such that $D(\theta(t)) \leq D_*$ for $t \geq t_*$, where $D_* \in (0,\pi/2)$ is given by $\sin D_* = \sin D_{\theta_0}$. This implies
\[
\lt|\theta_k(t-\tau) - \theta_i(t)\rt| = \lt|\theta_k(t) - \theta_i(t) - \int_{t - \tau}^t \frac{d}{ds}\theta_k(s)\,ds\rt| \leq D(\theta(t)) + R_w \tau \leq D_* + R_w\tau
\]
for $t\geq t_*$ and any $1 \leq i,k \leq N$. On the other hand, by ({\bf H3}), we get
\begin{equation}\label{estimate_below_bis}
\lt|\theta_k(t-\tau) - \theta_i(t)\rt| \leq D_* + R_w\tau < D_* + R_w \bar\tau\le \frac\pi2, \quad t\ge t_*>0.
\end{equation}
Thus, by writing $\zeta_* = \cos(D_* + R_w \tau  ) > 0$, we have
\begin{equation}\label{estimate_below}
\cos(\theta_k(t-\tau) - \theta_i(t)) > \zeta_* > 0, \quad t\ge t_*>0
\end{equation}
for $1 \leq i, k \leq N$. With the above observation, we provide the Gr\"onwall-type inequality for $D(\omega(t))$ in the lemma below.
\begin{lemma}\label{stimaresto}
Let $N>2$ and $\theta=\theta(t)$ be a global classical solution to \eqref{main_eq}-\eqref{IC}. If
\begin{equation*}
\sigma_\tau(t):=\int_{t-\tau}^t \max_{1\leq k \leq N} \vert {\dot \omega}_k(s)\vert \, ds,
\end{equation*}
then the velocity diameter $D(\omega(\cdot))$ satisfies
\begin{equation*}
\md^+ D(\omega (t))\leq 2 \kappa \sigma_{\tau} (t)-\kappa\,\zeta_* D(\omega (t))\quad \mbox{for all} \quad t\ge t_*,
\end{equation*}
where $\zeta_*, t_*$ are the positive constants in \eqref{estimate_below}.
\end{lemma}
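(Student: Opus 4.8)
The plan is to differentiate the frequency diameter through its extremal indices and then split each delayed frequency into an instantaneous part, which will generate the dissipative term $-\kappa\zeta_* D(\omega)$, and a delay remainder, which will be absorbed into $2\kappa\sigma_\tau$. Since each $\omega_i$ is $\mathcal{C}^1$, the map $D(\omega(\cdot))$ is Lipschitz, hence differentiable almost everywhere, and by the same countable-interval decomposition used in Lemma \ref{lem_ineq_d} one can choose at (almost) every $t\ge t_*$ a maximal index $M=M(t)$ and a minimal index $m=m(t)$ with $D(\omega(t))=\omega_M(t)-\omega_m(t)$ and $\md^+ D(\omega(t))=\dot\omega_M(t)-\dot\omega_m(t)$. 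Writing \eqref{eq_omega} for these two indices and abbreviating $c_{ik}(t):=\cos(\theta_k(t-\tau)-\theta_i(t))$ expresses $\md^+ D(\omega(t))$ as a difference of two coupling sums, where, crucially, \eqref{estimate_below} guarantees $\zeta_* < c_{ik}(t)\le 1$ for all $t\ge t_*$.

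Next I would substitute $\omega_k(t-\tau)=\omega_k(t)-\int_{t-\tau}^t\dot\omega_k(s)\,ds$, so that the integral remainder is controlled in absolute value by $\sigma_\tau(t)$. This decomposes $\md^+ D(\omega(t))$ into a main term built from the instantaneous differences $\omega_k(t)-\omega_M(t)$ and $\omega_k(t)-\omega_m(t)$, plus a remainder coming from the integral corrections. Because each of the two sums carries at most $N-1$ terms and $|c_{ik}|\le 1$, the remainder is bounded by $\tfrac{2\kappa(N-1)}{N}\sigma_\tau(t)\le 2\kappa\sigma_\tau(t)$, which is precisely the first term on the right-hand side of the claim.

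The heart of the argument is the main term. For $k\ne M$ one has $\omega_k(t)-\omega_M(t)\le 0$, while for $k\ne m$ one has $\omega_k(t)-\omega_m(t)\ge 0$; combined with $c_{ik}(t)>\zeta_*>0$, these sign patterns allow replacing every coefficient $c_{ik}$ by $\zeta_*$ with the inequality pointing the correct way, namely $c_{Mk}(\omega_k-\omega_M)\le \zeta_*(\omega_k-\omega_M)$ and $-c_{mk}(\omega_k-\omega_m)\le -\zeta_*(\omega_k-\omega_m)$. Summing over $k$ and using $\sum_{k\ne M}(\omega_k-\omega_M)=S-N\omega_M$ and $\sum_{k\ne m}(\omega_k-\omega_m)=S-N\omega_m$ with $S:=\sum_{k=1}^N\omega_k$, the instantaneous part collapses to $-N\,D(\omega(t))$, so after the prefactor $\kappa/N$ it contributes exactly $-\kappa\zeta_* D(\omega(t))$. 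Adding the two estimates yields the asserted inequality for all $t\ge t_*$.

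I expect the only delicate points to be, first, the justification that $\md^+ D(\omega(t))=\dot\omega_M-\dot\omega_m$ at almost every $t$ with a measurable choice of extremal indices, which is routine once the decomposition of Lemma \ref{lem_ineq_d} is invoked; and second, maintaining the correct direction of the inequality when passing from $c_{ik}$ to $\zeta_*$, where the strict positivity $\zeta_*>0$ for $t\ge t_*$, furnished by Proposition \ref{prop_key} via \eqref{estimate_below}, is what makes the coefficient replacement legitimate and turns the instantaneous part into a genuine dissipation. Everything else is the bookkeeping of the two sums.
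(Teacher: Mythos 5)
Your proposal is correct and follows essentially the same route as the paper: the same countable-interval choice of extremal indices, the same splitting of $\omega_k(t-\tau)$ into $\omega_k(t)$ plus an integral remainder absorbed into $2\kappa\sigma_\tau(t)$, and the same sign argument replacing $\cos(\theta_k(t-\tau)-\theta_i(t))$ by $\zeta_*$ to produce the dissipative term. The only cosmetic difference is that the paper computes $\tfrac12\mathcal{D}^+ D(\omega(t))^2$ and then divides by $D(\omega(t))$, whereas you differentiate $D(\omega(t))$ directly.
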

\begin{proof}
As in the proof of Lemma \ref{lem_ineq_d}, due to the continuity of the trajectories $\omega_i(t),$ $i=1, \dots, N,$  there is an at most countable system of open disjoint
intervals $\{\mathcal{I}_\sigma\}_{\sigma\in\N}$ such that
$$
   \bigcup_{\sigma\in\N} \overline{\mathcal{I}_\sigma} = [0,\infty),
$$
and  for each ${\sigma\in\N}$ there exist indices $i(\sigma)$, $j(\sigma)$
such that
\bq\label{dom}
   D(\omega(t)) = \omega_{i(\sigma)}(t) - \omega_{j(\sigma)}(t), \quad t\in \mathcal{I}_\sigma.
\eq
For simplicity of notation we can  put  $i:=i(\sigma)$, $j:=j(\sigma).$ Of course, we can assume $i\ne j.$ For $t\in \mathcal{I}_\sigma$, we have

\begin{align}\label{C2}
\begin{aligned}
&\frac{1}{2}\md^+ D(\omega(t))^2\cr
&\quad =\lt( \omega_i(t) -\omega_j(t)\rt)\Bigg( \frac \kappa N\sum_{k \ne i} (\cos (\theta_k(t-\tau)-\theta_i(t)) (\omega_k(t-\tau) -\omega_i(t))\\
& \hspace{4cm}- \frac \kappa N\sum_{k \ne j} (\cos (\theta_k(t-\tau)-\theta_j(t)) (\omega_k(t-\tau) -\omega_j(t))\Bigg)  \\
\hspace {0.5 cm}
&\quad = \frac \kappa N \sum_{k \ne i} \cos (\theta_k(t-\tau)-\theta_i(t)) \lt( \omega_i(t)-\omega_j(t)\rt)\lt( \omega_k(t-\tau) -\omega_i(t) \rt) \\
&\qquad  -\frac \kappa N\sum_{k \ne j} \cos (\theta_k(t-\tau)-\theta_j(t))\lt( \omega_i(t)-\omega_j(t)\rt)\lt( \omega_k(t-\tau) -\omega_j(t) \rt)  \\
&\quad = I_1+I_2.
\end{aligned}
\end{align}
Now, we can rewrite
\begin{align}\label{CI1}
\begin{aligned}
I_1 &= \frac \kappa N\sum_{k \ne i} \cos (\theta_k(t-\tau)-\theta_i(t)) \lt( \omega_i(t)-\omega_j(t)\rt)\lt( \omega_k(t-\tau) -\omega_k(t)\rt) \cr
&\quad + \frac  \kappa  N\sum_{k \ne i} \cos (\theta_k(t-\tau)-\theta_i(t)) \lt( \omega_i(t)-\omega_j(t)\rt) \lt(\omega_k(t)-\omega_i(t)\rt),\cr
I_2 &= -\frac \kappa N\sum_{k \ne j}  \cos (\theta_k(t-\tau)-\theta_j(t))\lt(\omega_i(t)-\omega_j(t)\rt)\lt(\omega_k(t-\tau) -\omega_k(t)\rt)\cr
&\quad -\frac \kappa N \sum_{k \ne j} \cos (\theta_k(t-\tau)-\theta_j(t)) \lt( \omega_i(t)-\omega_j(t)\rt)\lt( \omega_k(t)-\omega_j(t)\rt).
\end{aligned}
\end{align}
We observe that
\begin{equation*}
\omega_k(t) \leq \omega_i(t) \quad \mbox{and} \quad \omega_k(t) \geq \omega_j(t) \quad \mbox{for all} \quad k=1, \dots, N,
\end{equation*}
due to \eqref{dom}. Hence, using \eqref{estimate_below}  in \eqref{CI1}, we obtain
\begin{equation}\label{C10}
I_1 \leq \frac \kappa N  {D(\omega(t))}  \sum_{k=1}^N |\omega_k(t-\tau) -\omega_k(t)|+\frac \kappa N \zeta_* \sum_{k\ne i} \lt( \omega_i(t)-\omega_j(t)\rt)\lt(\omega_k(t)-\omega_i(t)\rt).
\end{equation}
Similarly, we also estimate
\begin{equation}\label{C11}
I_2 \leq \frac \kappa N D(\omega(t))\sum_{k=1}^N|\omega_k(t-\tau) -\omega_k(t)|-\frac \kappa N \zeta_*\sum_{k\ne j} \lt( \omega_i(t)-\omega_j(t)\rt)\lt( \omega_k(t)-\omega_j(t)\rt).
\end{equation}
Hence, using \eqref{C10} and \eqref{C11} in \eqref{C2}, we obtain
$$
\frac 1 2 \md^+ D(\omega(t))^2\leq 2 \frac{\kappa}N D(\omega(t)) \sum_{k=1}^N |\omega_k(t-\tau)-\omega_k(t)| -\kappa \zeta_* D(\omega(t))^2,
$$
and thus
\begin{equation}\label{C12}
\md^+ D(\omega(t))\leq 2 \frac \kappa  N \sum_{k=1}^N |\omega_k(t-\tau)-\omega_k(t)| -\kappa \zeta_* D(\omega(t)).
\end{equation}
One can estimate
$$\begin{aligned}
\sum_{k=1}^N |\omega_k(t-\tau) -\omega_k(t)|\leq \sum_{k=1}^N \int_{t-\tau }^t |\dot{\omega}_k(s)| \,ds \leq  N\sigma _{\tau} (t),
\end{aligned}$$
and so from \eqref{C12} we obtain
$$
\md^+ D(\omega(t)) \le 2 \kappa\sigma_{\tau} (t)-\kappa \zeta_* D(\omega(t)),
$$
which completes the proof.
\end{proof}
We then show that the time derivative of $\omega_i$ can be bounded from above by the sum of $\sigma_\tau$ and $D(\omega(t))$, which allows us to construct an appropriate Lyapunov functional for the complete frequency synchronization.
\begin{lemma}\label{altro}
Let $N>2$ and $\theta=\theta(t)$ be a global classical solution to \eqref{main_eq}-\eqref{IC}. Then we have
\begin{equation}\label{C15}
\max_{1 \leq i \leq N} |\dot{\omega}_i(t)| \le \kappa\sigma_\tau (t)+\kappa D(\omega(t)) \quad \mbox{for all} \quad t\ge t_*.
\end{equation}
\end{lemma}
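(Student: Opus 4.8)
The plan is to bound $|\dot\omega_i(t)|$ directly from the differentiated system \eqref{eq_omega}, without exploiting the positivity of the cosine factors (which was the point of \eqref{estimate_below} and played the essential role in Lemma \ref{stimaresto}); here the crude bound $|\cos(\theta_k(t-\tau)-\theta_i(t))| \le 1$ already suffices. Starting from \eqref{eq_omega} and applying the triangle inequality, for every fixed index $i$ one obtains
\[
|\dot\omega_i(t)| \le \frac\kappa N \sum_{k\ne i} |\omega_k(t-\tau) - \omega_i(t)|.
\]

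Next I would split each summand by inserting $\omega_k(t)$, writing
\[
|\omega_k(t-\tau) - \omega_i(t)| \le |\omega_k(t-\tau) - \omega_k(t)| + |\omega_k(t) - \omega_i(t)|.
\]
The second term is at most $D(\omega(t))$ directly from the definition of the frequency diameter. For the first term, representing $\omega_k(t-\tau) - \omega_k(t) = -\int_{t-\tau}^t \dot\omega_k(s)\,ds$ and taking absolute values gives
\[
|\omega_k(t-\tau) - \omega_k(t)| \le \int_{t-\tau}^t |\dot\omega_k(s)|\,ds \le \int_{t-\tau}^t \max_{1\le l\le N}|\dot\omega_l(s)|\,ds = \sigma_\tau(t),
\]
a bound that is uniform in $k$.

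Combining the two estimates, each of the $N-1$ summands is at most $\sigma_\tau(t) + D(\omega(t))$, so that
\[
|\dot\omega_i(t)| \le \frac\kappa N (N-1)\bigl(\sigma_\tau(t) + D(\omega(t))\bigr) \le \kappa \sigma_\tau(t) + \kappa D(\omega(t)),
\]
where the final step simply discards the factor $(N-1)/N < 1$. Since this bound is independent of $i$, taking the maximum over $i$ yields \eqref{C15}.

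This estimate is entirely elementary, and I do not anticipate a genuine obstacle. The only points worth keeping in mind are that, in contrast with Lemma \ref{stimaresto}, the positivity $\cos(\cdot) > \zeta_*$ is \emph{not} needed (only the trivial bound $|\cos| \le 1$ enters), and that the factor $(N-1)/N$ should be replaced by $1$ to arrive at the clean stated constant $\kappa$. The restriction $t \ge t_*$ plays no role in the argument itself and is retained only for compatibility with the subsequent Lyapunov analysis, where this lemma is combined with Lemma \ref{stimaresto}.
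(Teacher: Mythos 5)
Your proof is correct and follows essentially the same route as the paper: split $\omega_k(t-\tau)-\omega_i(t)$ by inserting $\omega_k(t)$, bound the cosines by $1$, control the delay term by $\sigma_\tau(t)$ and the instantaneous term by $D(\omega(t))$. Your observations that the positivity of the cosine and the restriction $t\ge t_*$ are not actually used here are also accurate.
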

\begin{proof}
It follows from \eqref{eq_omega} that
$$\begin{aligned}
|\dot{\omega}_i(t)| &= \Bigg| \frac \kappa N \sum_{k \ne i} \cos(\theta_k(t-\tau)-\theta_k(t)) (\omega_k(t-\tau) -\omega_k(t)) \cr
&\hspace{3cm}+\frac \kappa N \sum_{k\ne i} \cos(\theta_k(t-\tau)-\theta_k(t)) (\omega_k(t) -\omega_i(t))\Bigg|\cr
&\le\frac \kappa N\sum_{k=1}^N  |\omega_k(t-\tau) -\omega_k(t)|+\frac \kappa N\sum_{k=1}^N  |\omega_k(t)-\omega_i(t)| \cr
&\le \frac{\kappa}{N} \sum_{k=1}^N \int_{t-\tau }^t |\dot{\omega}_k(s)| \,ds +k D(\omega(t)) \cr
&\le  \kappa \sigma _{\tau} (t)+\kappa D(\omega(t)).
\end{aligned}$$
Taking the maximum for $i\in \{1, \dots, N\},$ we conclude \eqref{C15}.
\end{proof}
We now provide the details of our main result on the exponential complete frequency synchronization estimate.

\begin{proof}[Proof of Theorem \ref{thm_main}]
Let us define the Lyapunov functional
\[
\mathcal L(t)=D(\omega(t))+\eta \int_{t-\tau }^t e^{-(t-s)}{\int_s^t \max_{1 \leq j \leq N} |\dot{\omega}_j(\sigma )|\,d\sigma} \,ds.
\]
Then, from Lemma \ref{stimaresto} and Lemma \ref{altro} we have
$$\begin{aligned}
\md^+ \mathcal L(t) &=\md^+ D(\omega(t))-\eta e^{-\tau} \int_{t-\tau }^t \max_{1 \leq j \leq N} |\dot{\omega}_j(\sigma)|\,d\sigma \\
&\quad +\eta \int_{t-\tau}^t e^{-(t-s)} \max_{1 \leq j \leq N} |\dot{\omega}_j (t)| \,ds -\eta \int_{t-\tau }^t e^{-(t-s)}\int_s^t \max_{1 \leq j \leq N} |\dot{\omega}_j(\sigma )|\, d\sigma\, ds \\
&\le  \lt(-\kappa\zeta_* + \kappa\,\eta\,(1-e^{-\tau} )\rt) D(\omega(t))+ \lt(2\kappa -\eta e^{-\tau}+\kappa\, \eta\, (1-e^{-\tau})\rt) \sigma_{\tau} (t)\\
&\quad - \eta \int_{t-\tau}^t e^{-(t-s)} \int_s^t \max_{1 \leq j \leq N} |\dot{\omega}_j(\sigma)|\,d\sigma\, ds, \quad t\ge t_*.
\end{aligned}$$
Now, we want to show that if $\tau$ is sufficiently small one can choose the positive parameter $\eta$ in the definition of  the Lyapunov functional $\mathcal L(\cdot)$ such that previous estimate implies
\begin{equation}\label{C18}
\md^+\mathcal L(t)\le - \gamma \mathcal L(t) \quad \mbox{for all} \quad t\ge t_*,
\end{equation}
for a suitable positive constant $\gamma.$
In order to obtain \eqref{C18}, the parameters $\kappa, \eta$, and $\tau$ have to satisfy
\begin{equation}\label{eta1}
{2\kappa }-\eta\,e^{-\tau}+\kappa\,\eta\, (1-e^{-\tau})\le 0
\end{equation}
and
\begin{equation}\label{eta2}
-\kappa\,\zeta_* +\kappa\,\eta\, (1-e^{-\tau})<0.
\end{equation}
The inequality  \eqref{eta1} holds if and only if
\begin{equation}\label{C30}
\eta \ge \frac {2\kappa }{e^{-\tau }-\kappa (1-e^{-\tau })}.
\end{equation}
This implies a first restriction on the time delay size, namely
\begin{equation}\label{C31}
\tau <\ln \left ( 1+  \frac{1} \kappa
\right ).
\end{equation}
Condition \eqref{eta2} instead implies
\begin{equation}\label{C32}
\eta <\frac {\zeta_*}{1-e^{-\tau }}.
\end{equation}
Then, in order to choose a parameter  $\eta$ in the definition of $\mathcal L(\cdot)$ satisfying both \eqref{C30} and \eqref{C32}, we need
$$
\frac {2\kappa }{e^{-\tau }-\kappa (1-e^{-\tau })}< \frac {\zeta_*}{1-e^{-\tau }},
$$
and this gives a further condition on $\tau,$ namely
$$ \tau <\ln \left (
1+ \frac {\zeta_*} {\kappa (2+\zeta_*)}
\right ),
$$
which is stronger than \eqref{C31}. Thus, the theorem is proved.
\end{proof}

%
%
%
%
%

%
%
%
%

\section*{Acknowledgments}
Y.-P. Choi was supported by POSCO Science Fellowship of POSCO TJ Park Foundation. C. Pignotti was supported by GNAMPA-INdAM and RIA-UNIVAQ.

%
%
%
%


\begin{thebibliography}{10}
\bibitem{ABPRS} J. A. Acebr\'on, L. L. Bonilla, C. J. P. P\'erez Vicente, F. Ritort, and R. Spigler, The Kuramoto model: A simple paradigm for synchronization phenomena, Rev. Mod. Phys., 77, (2005), 137--185.
\bibitem{BCM15} D. Benedetto, E. Caglioti and U. Montemagno, On the complete phase synchronization of the Kuramoto model in the mean-field limit, Comm. Math. Sci., 13, (2015), 1775--1786.
\bibitem{BB} J. Buck, E. Buck, Biology of synchronous flashing of fireflies, Nature, 211, (1966), 562.
\bibitem{CCHKK14} J. A. Carrillo, Y.-P. Choi, S.-Y. Ha, M.-J. Kang, and Y. Kim, Contractivity of transport distances for the kinetic Kuramoto equation, J. Stat. Phys., 156, (2014), 395--415.
\bibitem{CHJK12} Y.-P. Choi, S.-Y. Ha, S. Jung, and Y. Kim, Asymptotic formation and orbital stability of phase-locked states for the Kuramoto model, Physica D, 241, (2012), 735--754.
\bibitem{CHKK13} Y.-P. Choi, S.-Y. Ha, M. Kang, and M. Kang, Exponential synchronization of finite-dimensional Kuramoto model at the critical coupling strength, Commun. Math. Sci., 11, (2013), 385--401.
\bibitem{CH17} Y.-P. Choi and J. Haskovec, Cucker-Smale model with normalized communication weights and time delay, Kinetic and Related Models, 10, (2017), 1011--1033.
\bibitem{CPP} Y.-P. Choi, A. Paolucci, and C. Pignotti, Consensus of the Hegselmann-Krause opinion formation model with time delay, preprint ArXiv:1909.02795.
\bibitem{CP19} Y.-P. Choi and C. Pignotti, Emergent behavior of Cucker-Smale model with normalized weights and distributed time delays, Networks and Heterogeneous Media, to appear.
\bibitem{CS09} N. Chopra and M. W. Spong, On exponential synchronization of Kuramoto oscillators, IEEE Trans. Autom. Control, 54, (2009), 353--357.
\bibitem{DB11} F. Dorfler, F. Bullo, On the critical coupling for Kuramoto oscillators, SIAM J. Appl. Dyn., 10, (2011) 1070--1099.
\bibitem{HKR16} S.-Y. Ha, H. K. Kim and S. W. Ryoo, Emergence of phase-locked states for the Kuramoto model in a large coupling regime, Comm. Math. Sci., 14, (2016), 1073--1091.
\bibitem{Ku75} Y. Kuramoto, International symposium on mathematical problems in mathematical physics, Lecture Notes Phys., 39, (1975), pp. 420.
\bibitem{N01} S.-I. Niculescu, Delay Effects on Stability: A Robust Control Approach. New York: Springer-Verlag, 2001, vol. 269, Lecture Notes in Control and Information Sciences.
\bibitem{PJM10} A. Papachristodoulou, A. Jadbabaie, and U. M\"unz, Effects of Delay in Multi-Agent Consensus and Oscillator Synchronization, IEEE Trans. Autom. Control, 55, (2010), 1471--1477.
\bibitem{PLR} A. Pluchino, V. Latora, and A. Rapisarda, Changing opinions in a changing world: a new
perspective in sociophysics Int. J. Mod. Phys. C, 16, (2005), 515--531.
\bibitem{SPMA12} G. S. Schmidt, A. Papachristodoulou, U. M\"unz, and F. Allg\"ower, Frequency synchronization and phase agreement in Kuramoto oscillator networks with delays, Automatica, 48, (2012), 3008--3017.
\bibitem{Ward} J. B. Ward, Equivalent circuits for power-flow studies, Trans. Am. Inst. Electr. Eng., 68, (2009), 373--382.
\bibitem{YS99} M. K. S. Yeung and S. H. Strogatz, Time delay in the Kuramoto model of coupled oscillators, Phys. Rev. Lett., 82, (1999), 648--651.
\end{thebibliography}
\end{document}